\newtheorem{proposition}{Proposition}
\newtheorem{theorem}{Theorem}
\newtheorem{remark}{Remark}
\def\erf{\text{erf}}
\newcommand\publishedtext{%
	\footnotesize \hspace{4cm}This is the authors' extended version of the paper submitted to IEEE Communications Letters.} 
\newcommand\copyrightnotice{%
\begin{tikzpicture}[remember picture,overlay]
\node[anchor=north,yshift=0pt] at (current page.north) {\fbox{\parbox{\dimexpr\textwidth-\fboxsep-\fboxrule\relax}{\publishedtext}}};
\end{tikzpicture}%
}
\begin{document}

\title{Power minimization and resource allocation in HetNets with uncertain channel-gains}

\author{Gabriel O. Ferreira\href{https://orcid.org/0000-0002-4592-8975}{\textsuperscript{\includegraphics[scale=0.01]{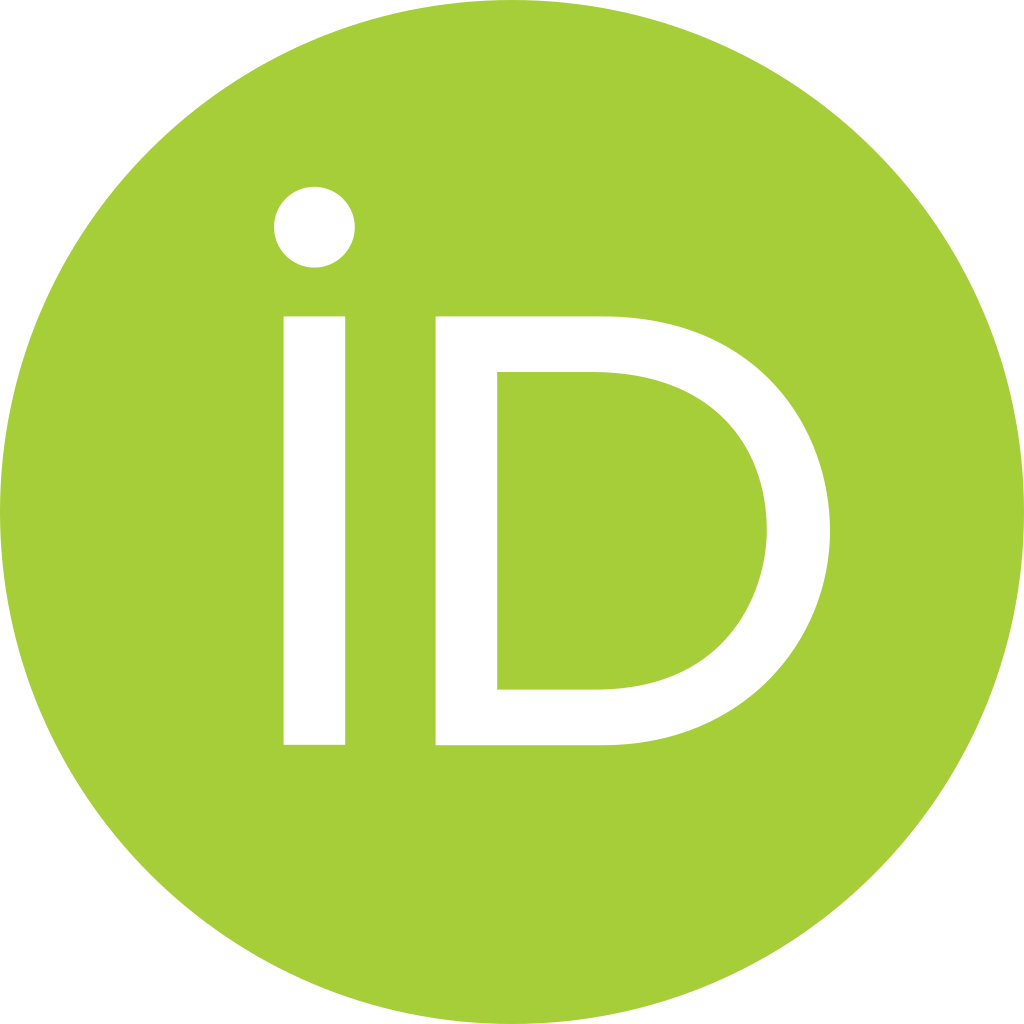}}},~\IEEEmembership{Member,~IEEE,} Chiara Ravazzi\href{https://orcid.org/0000-0002-3186-6195}{\textsuperscript{\includegraphics[scale=0.01]{Figures/ORCID_Logo.png}}},~\IEEEmembership{Member,~IEEE,} \\ Fabrizio Dabbene\href{https://orcid.org/0000-0002-2258-8971}{\textsuperscript{\includegraphics[scale=0.01]{Figures/ORCID_Logo.png}}},~\IEEEmembership{Senior Member,~IEEE,} Giuseppe C. Calafiore\href{https://orcid.org/0000-0002-6428-5653}{\textsuperscript{\includegraphics[scale=0.01]{Figures/ORCID_Logo.png}}},~\IEEEmembership{Fellow,~IEEE}
\thanks{Gabriel O. Ferreira, Chiara Ravazzi, and Fabrizio Dabbene are with IEIIT CNR Institute, Torino, Italy. (e-mail: gabriel.oliveiraferreira@ieiit.cnr.it; chiara.ravazzi@ieiit.cnr.it; fabrizio.dabbene@cnr.it). Giuseppe C. Calafiore is with DET, Politecnico di Torino, Italy, and CECS, VinUniversity, Vietnam (e-mail: giuseppe.calafiore@polito.it).}
\thanks{Work supported by PRIN project ``TECHIE: A control and network-based approach for fostering the adoption of new technologies in the ecological transition'' Cod. 2022KPHA24 CUP: B53D23002760006 and by the European Commission Horizon 2020 Framework Programme, H2020-MSCA-ITN-2019, MSCAITN-EID, Proposal No. 860239, BANYAN.}
}



\maketitle

\copyrightnotice\vspace*{-2pt}

\begin{abstract}
We propose an optimization problem to minimize the base stations transmission powers in OFDMA heterogeneous networks, while respecting users' individual throughput demands. The decision variables are the users' working bandwidths, their association, and the base stations transmission powers. To deal with wireless channel uncertainty, the channel gains are treated as random variables respecting a log-normal distribution,  leading to a non-convex chance constrained mixed-integer optimization problem, which is then formulated as a mixed-integer Robust Geometric Program. The efficacy of the proposed method is shown in a real-world scenario of a large European city. 
\end{abstract}

\begin{IEEEkeywords}
Heterogeneous networks, OFDMA, log-normal channel gains, geometric program, resource allocation.
\end{IEEEkeywords}

\section{Introduction}
\label{sec:Intro}

\IEEEPARstart{M}{obile} networks efficiency is a subject that has received significant attention from the scientific community as a consequence of the exponentially growing number of connected devices and their increasing traffic demands. Due to the limited Radio Access Networks (RAN) resources, a wide range of optimization problems such as spectral efficiency, base stations (BS) transmission powers design, and association between users and BS are of primary importance, considering that non-optimized RANs could lead to high energy consumption, paired with low data transmission capacity \cite{David2022}. 

Aiming at higher data rate for the users, fifth-generation of mobile cellular networks (5G) exploits frequency bands that can go from 5 GHz up to 300 GHz, leading to severe signal attenuation. In this context, network operators make use of heterogeneous networks (macro, micro, and femto cells deployed in relatively proximity) to provide seamless connection to the users' equipment (UE).

The channel gains between UEs and BSs have a fundamental importance when optimizing mobile networks efficiency. In general, these parameters can be evaluated by means of site specific ray tracing solvers that simulate the radio waves propagation. These strategies (or any other relying on physics-based models) have the limitation of depending on how accurate the environment is described and reconstructed into the propagation solvers/models. However, even if the buildings geometry and electromagnetic properties of the construction materials are known, parameters such as motion of objects (people, cars, etc) make these quantities stochastic in practice \cite{UQ_GlobeCom}. In real life situations the electromagnetic waves propagated by the antennas will suffer not only from shadowing, but also combined effects of reflection and diffraction. Consequently, the channel gain between a UE/BS pair is inherently a random quantity. Additionally, the log-normal distribution has been empirically proved to be an accurate model for the channel gains in indoor and outdoor environments \cite{Goldsmith_2005}.


In this paper, we extend the work presented in \cite{Ferreira_2024} to the case where the channel gains are random variables. We propose a joint optimization algorithm that minimizes the BSs transmission powers in OFDMA heterogeneous networks, while respecting chance-constrained individual users quality of service (QoS) requirements. Unlike many works in the literature (discussed in Section \ref{sec:RW}), our optimization problem does not rely on iterative/sequential procedures and does not require a known feasible initial solution, which in general leads to local optimal.

In Section \ref{sec:RW} we present the related works, some limitations of the techniques used to tackle to problem, and our contributions. The mathematical formulation of the problem is presented in Section \ref{sec:PF}. A Mixed-Integer Robust Geometric Program to solve the original Mixed-Integer non-convex chance constrained one is derived in Section \ref{sec:RobustMIGP}. Subsequently, numerical experiments are discussed in Section \ref{sec:example}. Lastly, conclusions and future works are presented. 

\textbf{Notation:} $[n]$ denotes $\{1,\ldots,n\}$. $C_{ij\ell}$ stands for a constant (or respective variable) obtained with parameters corresponding to user $i \in [n]$, base station $j \in [N]$, and approximation function $\ell \in [m]$. Similarly, $C_{ij}$, $C_{i}$, and $C_{j}$ denote constant/variable obtained with parameters related to: 1- user $i$ and base station $j$, 2- user $i$, and 3- base station $j$.

\section{Related work}
\label{sec:RW}

Different approaches are used to address the resource allocation problem. First, we present papers with distinct strategies to deal with the highly non-convex resource allocation characteristic. Subsequently, papers considering the channel-gains as random variables are introduced. We then conclude the section with the limitations of the cited approaches and highlight the contribution of the proposed solution.

\subsection{Resource allocation}

We refer to \textit{resource allocation} as the problem of optimizing users/BS association, bandwidth allocation for each user, and the BSs transmission powers. 

Due to the non-convex nature of resource allocation problems, some papers propose solutions based on successive approximation methods, for instance iteratively solving linear approximations of the original problem inside small regions until a local minimum or stop condition is achieved. Another common procedure is splitting the problem into smaller ones, where the decision variables are divided in subsets of constant and non constant parameters. Then a solution to a simpler version of the problem is achieved. Subsequently, the obtained solution becomes the subset of constant parameters and what was previously considered constant becomes the subset of variables. This procedure is repeated until a stop condition is achieved \cite{wang2017, Le2019}. In \cite{sun_2}, high-altitude platform with simultaneous wireless information and power transfer networks are studied. By using strategies such as discretization method, Log-Sum-Exp-dual scheme, and modified cyclic coordinate descent, the non-convex problem of maximizing a worst-case sum rate is then solved with an scalable robust optimization framework. In \cite{sun_1}, the authors propose a fast-convergence optimization for the problem of semantic computation rate under jamming attacks and channel state information imperfection. In summary, an algorithm based on monotonic optimization combined with second-order cone programming is developed to deal with the resulting quai-convex objective function and mixed-integer non-linear constraints.

Convex approaches usually circumvent the non-convexity either by setting lower/upper bounds to these functions or by approximating them with convex ones. Such technique has the advantage of not requiring a known feasible initial solution. Furthermore, these problems can be solved globally and efficiently. In \cite{Boyd:03}, a convex approach based on Geometric Programming (GP) is formulated to address simultaneously the problem of power allocation and routing in code-division multiple access wireless data networks. In \cite{Bao2015} resource allocation in heterogeneous networks is studied. More specifically, the decision variables are the radio resources, supporting elastic and inelastic network traffic, across multiple tiers and the objective function to be maximized is the downlink sum throughput, which is non-convex. The authors derive concave upper and lower bounds and solve an approximate convex problem. However, BSs transmission powers and users' association are not problem variables.
For other approaches applied to resource allocation in 5G heterogeneous networks, the reader may refer to the Survey in~\cite{xu2021}.


\subsection{Stochastic channel-gains}

The work of \cite{Boyd2002} presents power control optimization problems based on GP where the received power at a user equipment is an exponentially distributed random variable. Different problem derivations, such as minimize outage probability or transmission powers subject to a maximum outage probability, are studied. Even though GPs are convex optimization problems, the authors also present a fast heuristic approach to solve the minimization of outage probability. In \cite{Boyd2005}, power control in log-normal fading wireless channels is studied. The authors show that the problem of finding an optimal solution while respecting the users' quality of service can be formulated as a Stochastic GP. However, such a problem is very hard to solve and a relaxation based on Robust GP is proposed. 

\subsection{Limitations of literature approaches and our contributions}

Solutions based on successive approximation methods, such as gradient-descent based or difference of two convex functions (DC programming), usually have the limitation of being highly dependent on the initial feasible point provided to them, leading to distinct optimal values. More importantly, for large scale optimization problems, it might be extremely difficult  to find a feasible point where to start the iterative process. Additionally, the approximations only work inside a validity region close enough to the point of approximation. This might lead to time consuming solutions since the steps towards a local minimum inside the decision variables set are usually small, requiring possibly many iterations. NNs approaches require considerably amount of training data, which might not be available in real-world applications. 

This work lies on the convex approaches field, therefore the global optimum is guaranteed, there is no need of \textit{a priori} knowledge about a feasible initial condition, and no training data is required. Additionally, we also present contributions with respect to the approaches that also tackle the problem from a convex optimization perspective, such as: 1-the users throughput levels also depend on the amount of bandwidth associated to each one, and our approach includes decision variables to allocate the resources in an optimized manner; 2- with a suitable change of variables followed by a piecewise power function approximation, we formulate a less conservative lower bound to the \textit{Shannon-Hartley Theorem}.



\section{Problem Formulation}
\label{sec:PF}

Consider the following problem variables and constants:
\begin{itemize}

    \item[i)] $N$ (constant): number of base stations in an OFDMA heterogeneous network.

    \item[ii)] $n$ (constant): number of users that need to be associated to one base station among $N$ possible ones.
    
    \item[iii)] $P_j$ (variable): transmission power (W) of one resource block (RB) in BS $j$. Let $\mathcal{P}  \doteq [P_1,\ldots,P_N]$.

    \item[iv)] $\hat{P_{j}}$ (constant): maximum physical limit on the transmission powers of one resource block of base station $j$.
    
    \item[v)] $r_{i}$ (constant): throughput requirement, measured in (bits/s), of user $i$.

    \item[vi)] $B_j$ (constant): bandwidth (Hz) of BS $j$.
    
    \item[vii)] $x_{ij}$ (variable): resources of BS $j$ assigned to user $i$. Hence, $x_{ij}{B_{j}}$ can be seen as the working bandwidth assigned by BS $j$ to user $i$. We denote by $x\in [0,1]^{n\times N}$ the matrix of the elements $x_{ij}$, with $x_{ij} \in [0,1]$. 

    \item[viii)]$g_{ij}$: channel gain between user $i$ and BS $j$. They are random variables with known probability distribution, as it will be further addressed.

    \item[ix)]$z_{ij}$ (variable): binary variable $z_{ij}=1$ if a user $i$ is connected to BS $j$ and $z_{ij}=0$ otherwise. $\bar{z}_{ij} = 1-z_{ij}$.

    \item[x)] $M$ (constant): sufficiently large constant to apply the Big-M method. 

    \item[xi)] $\eta^{2}$(constant): noise power.
    
    \item[xii)] $S(\mathcal{P})$: Signal-to-Interference-Noise-Ratio. It is the ratio of the power of the wanted signal to the sum of noise and interfering powers of the other BSs, defined as:
    \begin{equation}\label{SINR}
       S_{ij}(\mathcal{P}) = \frac{P_jg_{ij}}{\eta^{2} + \sum_{k \neq j}P_k g_{ik}}, 
    \end{equation}
 where $\mathcal{P} = [P_1, \ldots, P_N ]$.
\end{itemize}

In \cite{Ferreira_2024}, we propose the following Mixed-Integer Geometric Programming (MIGP)
\begin{subequations}
\label{eq:MIGP}
\begin{align} 
 \min_{\bar z_{ij}, u_{ij}, q_j} \quad & \sum_{j=1}^{N} e^{q_j} \tag{MIGP} \label{prob_MIGP}\\
\textrm{s.t.:} & \nonumber\\
  & e^{u_{ij}} \leq 1, \quad  i\in[n], j\in[N]\label{eq:1a}\\
  &\,\bar z_{ij}\in\{0,1\}, \quad i\in[n], \quad j\in[N],\\
  & e^{q_j}  \leq \hat{P_{j}}, \quad j\in[N],\\
  & \sum_{j=1}^N \bar z_{ij} = N-1, \quad i\in[n],\\
  & \sum_{i=1}^{n} e^{u_{ij}} \leq 1, \quad  j\in[N],\label{eq:1e}\\ 
 & \hat{f_{\ell}}(q_j,u_{ij}) \leq A_{ij}^\ell + M \bar z_{ij}, \label{eq:1f}\\
   & \hspace{2.5cm}i\in[n], j\in[N], \ell\in[m],\nonumber
\end{align}
\end{subequations}
with
\begin{align}
\label{eq:PCA}
    \hat{f_{\ell}}(q_j,u_{ij}) &\doteq \log \left(\frac{\eta^{2}}{g_{ij}}e^{-q_j-\frac{u_{ij}}{b_{\ell}}}+\sum_{k \neq j} \frac{g_{ik}}{g_{ij}}e^{q_k-q_j-\frac{u_{ij}}{b_{\ell}}}\right),\\
    A_{ij\ell}&\doteq \frac{\log\left(\frac{B_ja_{\ell}}{r_i}\right)}{b_{\ell}}.
\end{align}
It can be shown (see \cite{Ferreira_2024} for details) that MIGP is an upper-bound solution to the non-convex integer optimization problem \eqref{nonconvex-OFDMA}, whose goal is to minimize transmission powers of base stations while respecting individual users' throughput constraints. 
\begin{subequations}
\label{min_NC}
    \begin{align}
        & \min_{x_{ij}, z_{ij}, P_j} \hspace{0.5cm} \sum_{j=1}^{N} P_j \tag{non-convex-OFDMA}\label{nonconvex-OFDMA}\\
        \textrm{s.t.:} & \nonumber\\
        & \,x_{ij}\in[0,1], \hspace{0.47cm} i\in[n], \quad j\in[N],\\
        &\,z_{ij}\in\{0,1\}, \quad i\in[n], \quad j\in[N],\\
        &\,0 \leq P_j \leq \hat{P_{j}}, \quad \quad j\in[N], \label{eq:c3}\\
        & \sum_{j=1}^{N} z_{ij} = 1, \quad i\in[n],\label{eq:c4}\\
        & \sum_{i=1}^{n} x_{ij} \leq 1, \quad j\in[N],\label{eq:c5}\\
        & 
\sum_{j=1}^N \left[x_{ij} B_j \log_2(1+S_{ij}(\mathcal{P}))\right] z_{ij} \geq r_i, \quad i\in[n].\label{eq:c6}
    \end{align}
\end{subequations}

For an OFDMA heterogeneous network composed of $N$ BS and $n$ users, constraints \eqref{eq:c4}, \eqref{eq:c5}, \eqref{eq:c6}  ensure that each user is connected to just one BS, a BS cannot provide more resources than available, and each user has an individual minimum throughput level to be respected, respectively. 

Note that Geometric Programs are convex, therefore when the binary variables $z_{ij}$ are known, optimization problem \eqref{prob_MIGP} can be solved globally. \eqref{prob_MIGP} was obtained by applying the change of variables $P_j = e^{q_j}$ and $x_{ij} = e^{u_{ij}}$ followed by a piecewise power function approximation of the \textit{Shannon-Hartley Theorem}
\begin{equation*}
 \log_2(1+S_{ij}(\mathcal{P})) \simeq a_{\ell}(1+S_{ij}(\mathcal{P}))^{b_{\ell}}, \quad \ell\in[m],
\end{equation*}
where $a_{\ell} > 0 $ and $b_{\ell} \in (0,1)$ are \textit{a priori} calculated parameters of the $m$ functions used in the approximation. The reader is referred to \cite{Ferreira_2024} for additional details and for the related proofs. Note that in that work,  channel gains are considered to be deterministic.

\subsection{The channel gains as random quantities}

In real-world environments, the channel gains are unknown random quantities, due to the combined effects of shadowing, multipath, diffraction, etc. In particular, as shown by \cite{Goldsmith_2005}, these quantities respect a log-normal distribution, i.e., 
 the dB gains defined as 
\begin{equation}
    g_{ij}^{(dB)} \doteq 10 \log_{10} g_{ij}, \quad i\in[n], j\in[N]
\end{equation}
obey a Gaussian distribution $g_{ij}^{(dB)} \sim \mathcal{N} (\tilde{\mu}_{ij}, \tilde{\sigma}_{ij}^2)$. We write this as
$g_{ij}\sim\mathcal{LN} (\mu_{ij}, \sigma_{ij}^2)$. 
Furthermore, we can define the normalized gains as
\begin{equation}\label{eq:rho_ij}
    \rho_{ij}  \doteq \frac{10 \log_{10} g_{ij} - \tilde{\mu}_{ij}}{\tilde{\sigma}_{ij}} \sim \mathcal{N}(0, 1), \quad i\in[n], j\in[N].
\end{equation}
From \eqref{eq:rho_ij} and with $c = \frac{\log 10}{10}$, we have
\begin{equation}\label{eq_g_exp}
    g_{ij} = e^{c(\tilde{\mu}_{ij}+\rho_{ij} \tilde{\sigma}_{ij})}, \quad i\in[n], j\in[N].
\end{equation}

\subsection{Stochastic MIGP}
Since the channel gains are random variables and the users' throughput levels are highly dependent on them, constraint \eqref{eq:1f} becomes stochastic, and must be respected with a probability $\mathbb{P}_i$ given by $1-\alpha_i$, leading to the following chance-constrained optimization problem 
\begin{subequations}\label{min_SGP}
\begin{align}
 \min_{\bar z_{ij}, u_{ij}, q_j} \quad & \sum_{j=1}^{N} e^{q_j}\tag{CC-MIGP}\label{prob:CC-MIGP}\\
\text{s.t.: } & e^{u_{ij}} \leq 1, \quad  i\in[n], j\in[N]\\
  &\,\bar z_{ij}\in\{0,1\}, \quad i\in[n], \quad j\in[N],\\
  & e^{q_j}  \leq \hat{P_{j}}, \quad j\in[N],\\
  & \sum_{j=1}^N \bar z_{ij} = N-1, \quad i\in[n],\\
  & \sum_{i=1}^{n} e^{u_{ij}} \leq 1, \quad  j\in[N],\\
  & \mathbb{P}_i\left(\hat{f_{\ell}}(q_j,u_{ij}) \leq A_{ij\ell} + M \bar z_{ij}\right) \geq 1-\alpha_i \nonumber\\
   & \hspace{2cm} \ell\in[m], i\in[n], j\in[N].
\end{align}
\end{subequations}

Chance constrained optimization problems might be difficult to handle. Then, inspired by the work of \cite{Boyd2005} we present a robust formulation based solution.

\section{Robust MIGP}
\label{sec:RobustMIGP}

\begin{proposition}[Box uncertainty]
Let $\rho_{ij}$, $i\in[n]$, $j\in[N]$ be Gaussian random variables with zero mean and unitary standard deviation, as in \eqref{eq:rho_ij}. For each user, compute the quantities $[\underline{\rho}_{ij},\overline{\rho}_{ij}]$ such that
\begin{equation}
\begin{cases}
\displaystyle \mathbb{P}(\underline{\rho}_{ij} \leq {\rho}_{ij} \leq \overline{\rho}_{ij}) = \frac{\erf \left[\frac{\overline{\rho}_{ij}}{\sqrt{2}}\right]-\erf\left[\frac{\underline{\rho}_{ij}}{\sqrt{2}}\right]}{2} = \varphi_j, \hspace{0.2cm} j\in[N],\\
\hfill \\
\prod_{j=1}^N \varphi_j = 1-\alpha_i.\
\end{cases}
\end{equation}
Then a robust optimization problem considering the uncertainty box can be derived such that its optimal solution is feasible to the CC-MIGP.
\end{proposition}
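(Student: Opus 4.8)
The plan is to replace the probabilistic constraint of \eqref{prob:CC-MIGP} by a deterministic worst-case (robust) constraint over the uncertainty box, and then to show that feasibility of the robust problem implies the chance constraint is satisfied. The whole argument rests on exploiting the monotonicity of $\hat f_\ell$ in each normalized gain $\rho_{ik}$.

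First I would fix a triple $(i,j,\ell)$ and observe that, through \eqref{eq_g_exp}, the function $\hat f_\ell(q_j,u_{ij})$ depends on the whole vector $(\rho_{i1},\dots,\rho_{iN})$: the signal gain $g_{ij}$ enters every term of the log-sum-of-exponentials through the denominator, whereas each interfering gain $g_{ik}$, $k\neq j$, enters only its own numerator. Since $g_{ik}=e^{c(\tilde\mu_{ik}+\rho_{ik}\tilde\sigma_{ik})}$ with $c>0$ and $\tilde\sigma_{ik}>0$, every $g_{ik}$ is strictly increasing in $\rho_{ik}$. Composing with the logarithm, which preserves monotonicity, I conclude that for any fixed values of the decision variables $q,u$ the function $\hat f_\ell$ is monotone decreasing in $\rho_{ij}$ and monotone increasing in each $\rho_{ik}$, $k\neq j$.

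Next, by this componentwise monotonicity the maximum of $\hat f_\ell$ over the box $\prod_{k=1}^N[\underline\rho_{ik},\overline\rho_{ik}]$ is attained at the single vertex $\rho_{ij}=\underline\rho_{ij}$, $\rho_{ik}=\overline\rho_{ik}$ ($k\neq j$). Substituting the associated worst-case gains $\underline g_{ij}=e^{c(\tilde\mu_{ij}+\underline\rho_{ij}\tilde\sigma_{ij})}$ and $\overline g_{ik}=e^{c(\tilde\mu_{ik}+\overline\rho_{ik}\tilde\sigma_{ik})}$ into $\hat f_\ell$ yields a deterministic constraint of exactly the same log-sum-of-exponentials form as \eqref{eq:1f}; enforcing it for all $(i,j,\ell)$ defines the robust MIGP, which remains a (mixed-integer) GP and hence is solvable globally once the $\bar z_{ij}$ are fixed.

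It then remains to connect robust feasibility to the chance constraint. By construction the robust constraint guarantees $\hat f_\ell(q_j,u_{ij})\leq A_{ij\ell}+M\bar z_{ij}$ for \emph{every} realization of $(\rho_{i1},\dots,\rho_{iN})$ lying in the box, so the event $\{\rho_{ik}\in[\underline\rho_{ik},\overline\rho_{ik}],\ \forall k\in[N]\}$ is contained in the event that the constraint holds. Assuming the gains of a fixed user are mutually independent, the probability of the box event factorizes, $\prod_{k=1}^N\mathbb P(\underline\rho_{ik}\leq\rho_{ik}\leq\overline\rho_{ik})=\prod_{k=1}^N\varphi_k=1-\alpha_i$, and monotonicity of probability under set inclusion gives $\mathbb P_i(\hat f_\ell\leq A_{ij\ell}+M\bar z_{ij})\geq 1-\alpha_i$, which is precisely the chance constraint of \eqref{prob:CC-MIGP}; thus any feasible point of the robust problem, in particular its optimum, is feasible for CC-MIGP. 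The main obstacle is the reduction to a single worst-case vertex: it requires verifying that $\hat f_\ell$ decreases in the signal gain $g_{ij}$ while increasing in each interfering gain $g_{ik}$, $k\neq j$, so that the box maximum sits at $\rho_{ij}=\underline\rho_{ij}$, $\rho_{ik}=\overline\rho_{ik}$, and the factorization $\prod_j\varphi_j=1-\alpha_i$ relies essentially on the independence of the per-user gains.
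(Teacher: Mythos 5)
Your proof is correct and follows essentially the same route as the paper: substitute the log-normal gain expressions into $\hat f_\ell$, evaluate at the worst-case vertex of the box (lowest signal gain $\underline{\rho}_{ij}$, highest interference $\overline{\rho}_{ik}$), and conclude that feasibility at that vertex implies the constraint holds on the whole box, whose probability $\prod_{j}\varphi_j = 1-\alpha_i$ lower-bounds the chance constraint. You additionally make explicit two steps the paper leaves implicit --- the componentwise monotonicity of $\hat f_\ell$ in each $\rho_{ik}$ that justifies reducing to a single vertex, and the mutual independence of a user's gains needed for the box probability to factorize --- which tightens rather than departs from the paper's argument.
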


\begin{theorem}
For uncertainty boxes given by Proposition 1, the optimal solution $u_{ij}^{\star}, \bar z_{ij}^{\star}, q_j^{\star}$ to 
\begin{subequations}\label{min_RGP}
\begin{align}
 \min_{\bar z_{ij}, u_{ij}, q_j} \quad & \sum_{j=1}^{N} e^{q_j}\tag{Robust-MIGP} \label{min_RGP_label}\\
\text{s.t.: } &  \eqref{eq:1a}-\eqref{eq:1e}\\
  & \log(\eta^{2} e^{\beta_{ij\ell}}+ \sum_{k \neq j} e^{\zeta_{ik\ell}}) \leq A_{ij\ell} + M \bar z_{ij},\nonumber\\
  &\hspace{0.3cm} \forall \rho_{ij} \in [\underline{\rho}_{ij},\overline{\rho}_{ij}] \hspace{0.2cm}\text{and} \hspace{0.2cm} \forall \rho_{ik} \in [\underline{\rho}_{ik},\overline{\rho}_{ik}] \nonumber\\  
   & \hspace{2cm}\quad \ell\in[m],i\in[n], j\in[N], \label{eq:Rob_C} 
\end{align}
\end{subequations}
with
\begin{align*}
    \beta_{ij\ell} &=-c(\tilde{\mu}_{ij}+\underline{\rho}_{ij}\tilde{\sigma}_{ij})-q_j-\frac{u_{ij}}{b_{\ell}},\\ 
    \zeta_{ik\ell} &= c(\tilde{\mu}_{ik}+\overline{\rho}_{ik}\tilde{\sigma}_{ik}-\tilde{\mu}_{ij}-\underline{\rho}_{ij}\tilde{\sigma}_{ij})+q_k-q_j-\frac{u_{ij}}{b_{\ell}},
    \end{align*}
is always feasible to the chance constrained optimization problem in \eqref{min_SGP}, i.e.
\begin{equation*}
\text{Robust MIGP}(u_{ij}^{\star}, x_{ij}^{\star}, P_j^{\star}) \implies \text{Stochastic MIGP}.   
\end{equation*}
\end{theorem}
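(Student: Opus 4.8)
The plan is to convert the deterministic, worst-case feasibility guaranteed by \eqref{eq:Rob_C} into the probabilistic guarantee required by \eqref{min_SGP}, through a set-inclusion argument built on top of Proposition 1. First I would substitute the log-normal parametrization \eqref{eq_g_exp} into the definition \eqref{eq:PCA} of $\hat{f}_{\ell}$, so that each of its $N$ exponential terms is written explicitly as a function of the standard normal variables $\rho_{ij}$ and $\{\rho_{ik}\}_{k\neq j}$; after this substitution the noise term carries a factor $e^{-c(\tilde{\mu}_{ij}+\rho_{ij}\tilde{\sigma}_{ij})}$ and the $k$-th interference term a factor $e^{c(\tilde{\mu}_{ik}+\rho_{ik}\tilde{\sigma}_{ik})-c(\tilde{\mu}_{ij}+\rho_{ij}\tilde{\sigma}_{ij})}$.

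The central step is a monotonicity analysis of $\hat{f}_{\ell}$ over the box. Since $c>0$ and $\tilde{\sigma}_{ij}>0$, the noise term is strictly decreasing in $\rho_{ij}$, hence maximized over $[\underline{\rho}_{ij},\overline{\rho}_{ij}]$ at $\rho_{ij}=\underline{\rho}_{ij}$; each interference term is increasing in $\rho_{ik}$ and decreasing in $\rho_{ij}$, hence maximized at $\rho_{ik}=\overline{\rho}_{ik}$ and $\rho_{ij}=\underline{\rho}_{ij}$. The crucial observation is that every term demands the same value $\rho_{ij}=\underline{\rho}_{ij}$ for the serving-station variable and otherwise depends on disjoint interferer variables, so a single corner of the box simultaneously maximizes all terms. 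As $\log(\cdot)$ is monotone increasing, that corner maximizes $\hat{f}_{\ell}$ itself, and evaluating it reproduces exactly the exponents $\beta_{ij\ell}$ and $\zeta_{ik\ell}$, giving
\[
\sup_{\rho_{ij}\in[\underline{\rho}_{ij},\overline{\rho}_{ij}],\,\rho_{ik}\in[\underline{\rho}_{ik},\overline{\rho}_{ik}]} \hat{f}_{\ell}(q_j,u_{ij}) \;=\; \log\!\Big(\eta^{2} e^{\beta_{ij\ell}}+\sum_{k\neq j} e^{\zeta_{ik\ell}}\Big).
\]
Thus \eqref{eq:Rob_C} is precisely the statement that this supremum lies below $A_{ij\ell}+M\bar z_{ij}$.

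Finally I would close the argument probabilistically. Because the supremum bound holds, every realization whose components all fall inside their respective intervals satisfies $\hat{f}_{\ell}(q_j,u_{ij})\leq A_{ij\ell}+M\bar z_{ij}$; this is an inclusion of the box event into the feasibility event. Taking probabilities and invoking independence of the channel gains across base stations yields $\mathbb{P}(\text{box})=\prod_{k=1}^{N}\mathbb{P}(\rho_{ik}\in[\underline{\rho}_{ik},\overline{\rho}_{ik}])=\prod_{k=1}^{N}\varphi_k=1-\alpha_i$ by Proposition 1, and therefore $\mathbb{P}_i(\hat{f}_{\ell}\leq A_{ij\ell}+M\bar z_{ij})\geq 1-\alpha_i$ for every $\ell\in[m]$, $i\in[n]$, $j\in[N]$, which is exactly the chance constraint of \eqref{min_SGP}. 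I expect the main obstacle to be the rigorous justification of the single-common-corner claim, i.e. verifying that no two terms pull $\rho_{ij}$ in opposite directions so that the worst case is attained jointly rather than only termwise; a secondary care point is checking that the product $\prod_{k=1}^{N}\varphi_k$ in Proposition 1 ranges over all $N$ station variables (the serving one with probability $\varphi_j$ together with the $N-1$ interferers), which is what makes the box probability equal $1-\alpha_i$ independently of which $j$ indexes the constraint, and that the across-station independence assumption is indeed available.
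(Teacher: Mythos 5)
Your proposal is correct and takes essentially the same route as the paper's proof: substitute the log-normal parametrization \eqref{eq_g_exp} into $\hat{f}_{\ell}$, observe that $\beta_{ij\ell}$ and $\zeta_{ik\ell}$ encode the worst-case corner of the uncertainty box (lowest serving gain $\underline{\rho}_{ij}$, highest interference $\overline{\rho}_{ik}$), and conclude that feasibility at that corner implies feasibility for every realization in the box, whose probability is $1-\alpha_i$ by Proposition 1. Your write-up is in fact more rigorous than the paper's terse argument, since you explicitly verify the single-common-corner monotonicity claim (all terms agree on $\rho_{ij}=\underline{\rho}_{ij}$ and depend on disjoint interferer variables) and spell out the event-inclusion, independence-across-stations, and probability steps that the paper leaves implicit.
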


\begin{proof}
Constraint \eqref{eq:Rob_C} is obtained by replacing the channel gains as given in \eqref{eq_g_exp} into \eqref{eq:PCA}. Additionally, the terms $\beta_{ij\ell}$ and $\zeta_{ik\ell}$ consider the worst-case scenario inside the uncertainty box, i.e. the problem is solved for $\underline{\rho}_{ij}$ (lowest channel gain) and $\overline{\rho}_{ik}$ (highest interference) for each user. Therefore, the optimal solution $u_{ij}^{\star}, x_{ij}^{\star}, P_j^{\star}$ is feasible for any other possible combination of $\rho_{ij} \in  [\underline{\rho}_{ij},\overline{\rho}_{ij}]$ and $ \rho_{ik} \in [\underline{\rho}_{ik},\overline{\rho}_{ik}]$ $\forall k \neq j$ inside the uncertainty box, concluding the proof. 
\end{proof}

Note that we can easily retrieve $ \hat{f_{\ell}}(q_j,u_{ij})$ when the channel gains are given constants by making $\tilde{\sigma}_{ij} = \tilde{\sigma}_{ik} = 0$. 

\begin{remark}
In many real-world networks, association between UEs and BSs are given. For our application, this means that the values of all $z_{ij}$ are known and the optimization problem \eqref{min_RGP} becomes a standard Geometric Program, for which a global optimal solution can be always found in polynomial-time. Regarding scenarios with unknown binary variables, a solution can be obtained with branch \& bound methods, which increases the computational effort exponentially with the problem size \cite{Boyd2007}. 
\end{remark}


\section{Application use cases}
\label{sec:example}
 
\subsection{Channel gains expected values}
\label{sec:CG}



We estimate the channel gains from a neighbourhood in a large European city by reconstructing it (considering building height, city layout, BSs location, etc) into a 3D ray tracing propagation software, allowing a careful quantification of both path-loss and multi-path components incorporating them into the respective expected value for the channel gains. Figure \ref{fig_std_cg}.(a) shows the channel gains expected values $\tilde{\mu}_{ij}$ and Figures \ref{fig_std_cg}.(b) and \ref{fig_std_cg}.(c) expose how the randomness in these quantities can lead to a different quality of signal and coverage. Therefore, a scenario optimized considering only the expected values might not respect all constraints in real-life applications. Specifically to the optimization problem at hand, not treating the channel gains as random quantities might lead to a solution where many UEs have less throughput than their respective required threshold, estimated according to \cite{Andre_2023}.

\subsection{Optimization Scenario}
\label{sec:os}

All results obtained in the following subsections were obtained considering a neighbourhood from a large European city. The scenario consists of $N=5$ base stations, $n=130$ users, the channel gains expected values are the ones presented in Figure \ref{fig_std_cg} (a), and $M = 10^6$. Additionally, we use $m=5$ functions in the piecewise power function approximation, whose parameters are
\begin{equation*}
\begin{split}
    a_\ell & = [1.4080, 0.7720, 1.3436, 2.0641, 2.8584] \quad \text{and}\\
    b_\ell & = [1, 0.7994 0.3928 0.2538 0.1840].
\end{split}
\end{equation*}
%
\subsection{On the probability of the chance constrained optimization}
\label{sec:CCOP}

The channel gains standard deviation give information on how disperse these values are with respect to the expected ones obtained with procedure in Section \ref{sec:CG}. Therefore, increasing $\tilde{\sigma}$ means that the robust approach needs to be solved for worse cases given a fixed probability in the constraint. The impact of this parameter in the optimal value is shown in Figure \ref{fig_sigma_rho}.(a) (here we used $\tilde{\sigma}_{ij} = \tilde{\sigma}$ $i\in[n], j\in[N]$). Note that $\tilde{\sigma} = 5$ and probability levels of 85\% and 90\% lead to infeasible problems.

We can fix the standard deviation of the channel gains and vary the probability level in the constraints by modifying the uncertainty intervals given by $\rho_{ij} \in [\underline{\rho}_{ij},\overline{\rho}_{ij}]$ and $\rho_{ik} \in [\underline{\rho}_{ik},\overline{\rho}_{ik}]$. Similarly to the previous case, when we increase these intervals (and the box of uncertainties), the robust optimization problem needs to respect these constraints for worse scenarios, degrading the optimal value as in Figure \ref{fig_sigma_rho}.(b). The problem becomes infeasible considering a probability level of 95\% and $\tilde{\sigma}=4$.
In another experiment, optimization problem \eqref{min_RGP} is solved by considering the uncertainty box intervals 
\begin{equation*}
\begin{split}
[\underline{\rho}_{ij},\overline{\rho}_{ij}] = [-2.04, \infty), \quad 
[\underline{\rho}_{ik},\overline{\rho}_{ik}] = (-\infty,2.04],
\end{split}
\end{equation*}
and standard deviation $\tilde{\sigma} = 3$ for all variables $\rho_{ij}$ and $\rho_{ik}$.    
Therefore $\mathbb{P}(\rho_{ij} > -2.04) = 0.9793, \hspace{0.2cm} \mathbb{P}(\rho_{ik} < 2.04) = 0.9793,$ and
%
%
\begin{equation*}
    \prod_{j=1}^N \varphi_j = 0.9793^{5} = 0.9007.
\end{equation*} 

For each UE, $10^5$ combination samples with different values for each standard Gaussian variable $\rho_{ij}$ and $\rho_{ik}$ were generated. We then calculated the throughput of the respective UE considering the transmission powers, resource allocation, and association given by optimization problem \eqref{min_RGP} for each combination of $\rho_{ij}$ and $\rho_{ik}$. Notice in Figure \ref{fig_stats} that even if around 10\% of the samples do not belong to the uncertainty box, there is no constraint violation for many of those scenarios. For instance consider UE 130: 10.088\% of the samples combination do not belong to the proposed interval, however in just 1.408\% the respective throughput constraint was violated.
\begin{figure*}[t]
\centering
\includegraphics[width=0.3\linewidth]{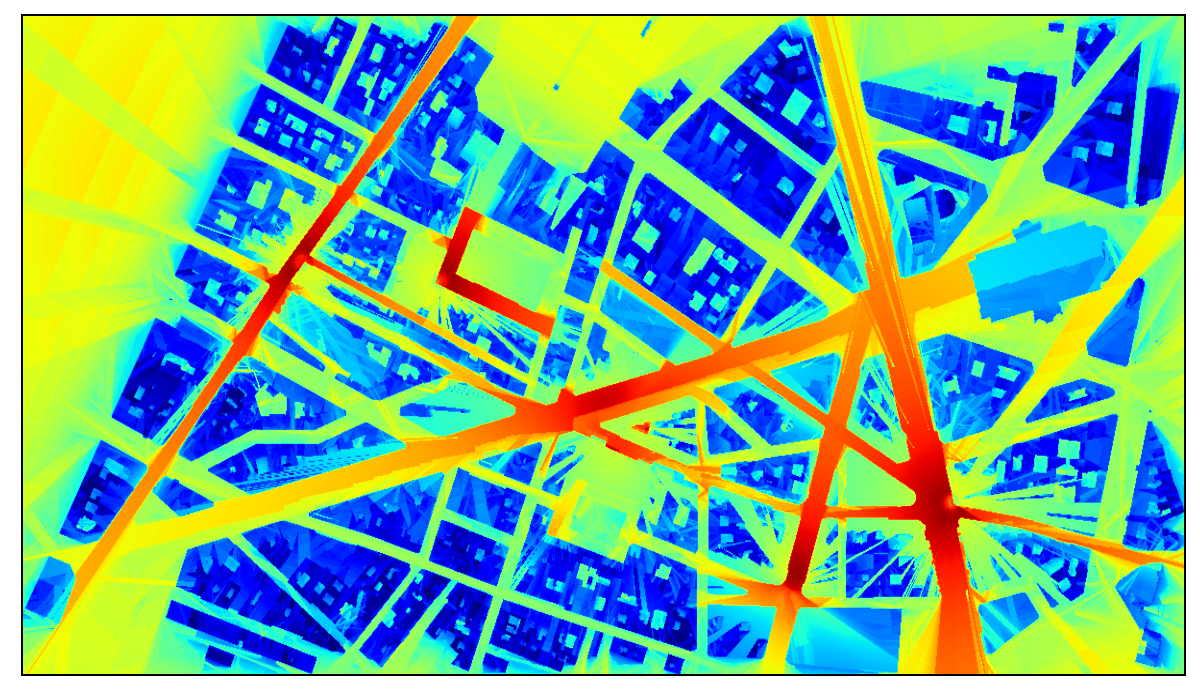}%
\hfil
\includegraphics[width=0.3\linewidth]{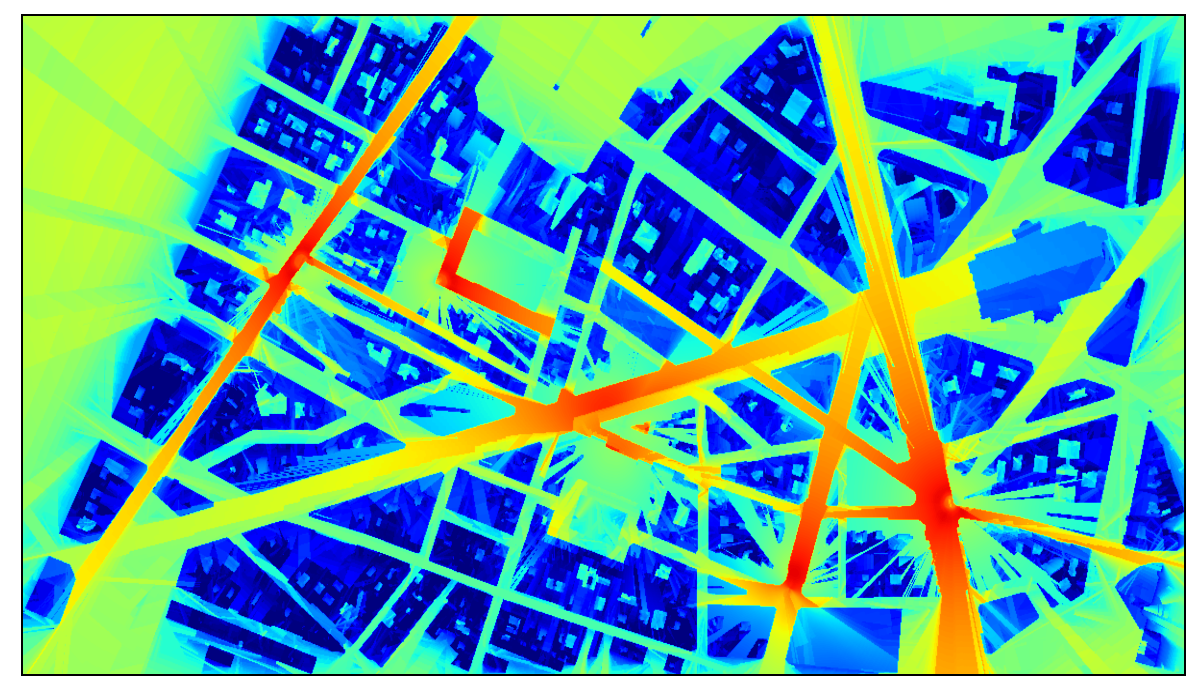}%
\hfil
\includegraphics[width=0.35\linewidth]{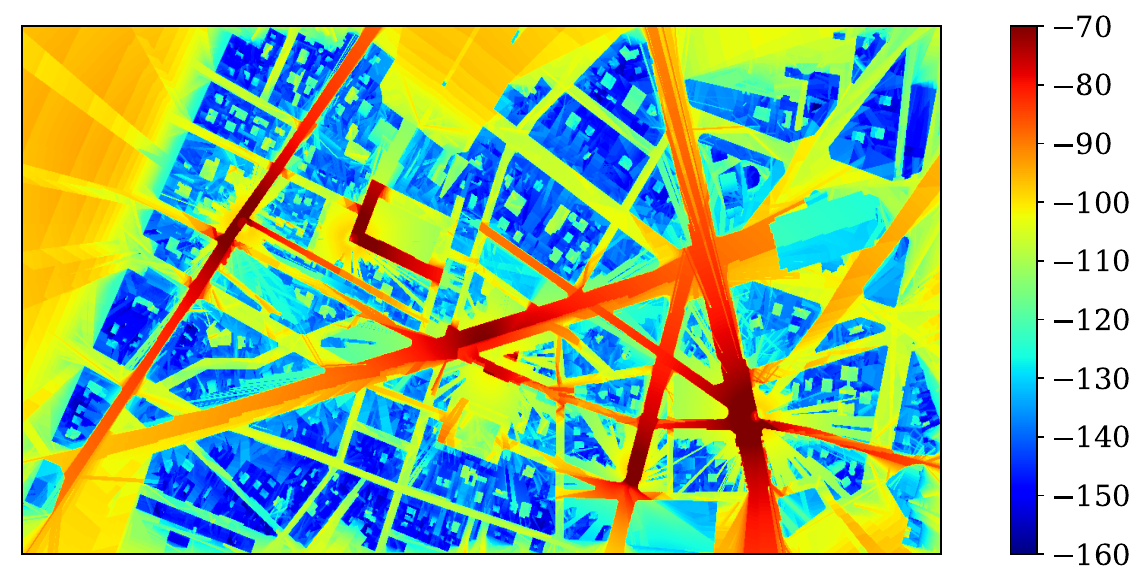} \\
\makebox[0.3\linewidth][c]{\small (a)}%
\hfil
\makebox[0.3\linewidth][c]{\small (b)}%
\hfil
\makebox[0.35\linewidth][c]{\small (c)} \\
\caption{Channel gains (dB) for $\tilde{\sigma} = 3$ and (a) $\tilde{\mu}_{ij}$, (b) $\tilde{\mu}_{ij} - 2 \tilde{\sigma}$, and (c) $\tilde{\mu}_{ij} + 2 \tilde{\sigma}$.}
\label{fig_std_cg}
\end{figure*}

\begin{figure}[ht!]
    \centering
    \includegraphics[width=8.8cm]{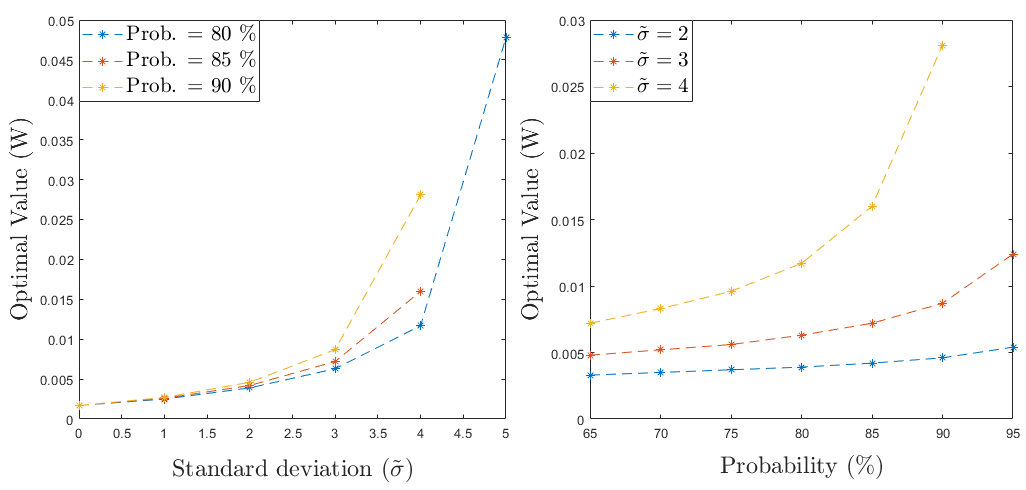}
    \makebox[0.5\linewidth][c]{\small (a)}%
    \hfil
    \makebox[0.5\linewidth][c]{\small (b)}%
    \caption{The optimal value as a function of (a) $\tilde{\sigma}$ and (b) constraint probability.}
    \label{fig_sigma_rho}
\end{figure}
\begin{figure}[ht!]
    \centering
    \includegraphics[width=8.8cm]{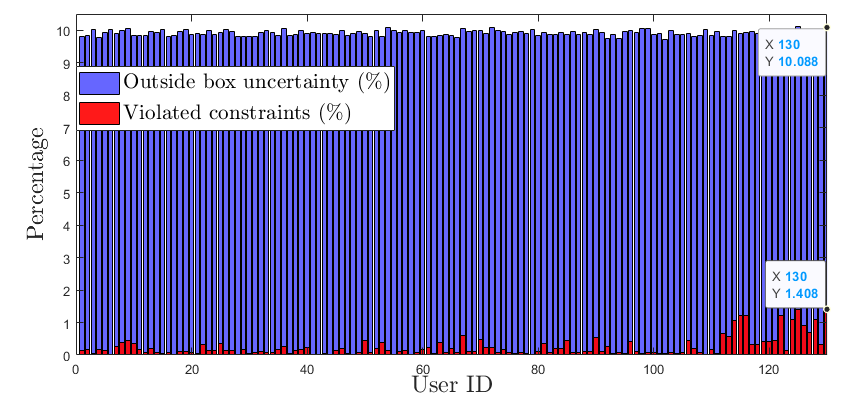}
    \caption{Percentage of channel gains combination outside box of uncertainties $i\in[n]$ and violated constraints.}
    \label{fig_stats}
\end{figure}

\subsection{Comparison}
\label{sec:comp}

In \cite{Ferreira_2024}, comparisons between the \eqref{prob_MIGP} and  \eqref{nonconvex-OFDMA} were performed, where the latter one was solved with 1) successive linearization and gradient descent (GD), 2) difference of convex (DC) functions programming, and 3) with the commercial solver MIDACO (Ant Colony Optimization). In all cases, the GP based optimization problem provides remarkably greater optimal value with substantially smaller solving time. Therefore here we perform comparisons only between the deterministic and stochastic approaches, namely \eqref{prob_MIGP} and \eqref{min_RGP_label} respectively. We solved both problems and tested the optimal solutions in $10^5$ different channel gains combinations (with log-normal distribution and a given $\tilde{\sigma}$ for each user). The percentage of times that the throughput constraints were violated is presented in Table \ref{tab:table_comp}. Despite providing a smaller optimal value, the deterministic approach fails to respect users demands in many more situations, since they were designed considering the $\tilde{\sigma}=0$. 

\begin{table}[ht!]
  \begin{center}
    \caption{Percentage of unsatisfied users' throughput requirements.}
    \label{tab:table_comp}
    \begin{tabular}{c|cc|cc} 
       & \multicolumn{2}{c|}{\textbf{Robust MIGP}} & \multicolumn{2}{c}{\textbf{MIGP}} \\ 
      \hline
      $\tilde{\sigma}$ & Percentage & $\sum_j P_j$ (W) & Percentage & $\sum_j P_j (\tilde{\sigma}=0)$\\ 
      \hline \hline
       2 & 0.29 \% & 0.0046 & 29.40 \% & 0.0017 \\
       3 & 0.24 \% & 0.0087 & 34.93 \% & 0.0017 \\
       4 & 0.12 \% & 0.0281 & 38.40 \% & 0.0017 \\
      \hline
    \end{tabular}
  \end{center}
\end{table}

\subsection{Robust MIGP and different channel gains distribution}

Even if the Robust MIGP assumes log-normal distribution for the channel gains, it is able to respect the chance-constrained optimization problem in different scenarios. Notice in Table \ref{tab:comp_dist} that the proposed approach can easily guarantee that at least 90\% of the users' throughput demands are respected when the channel gains combinations come from a log-normal distribution. This is expected since the problem is formulated under that assumption. If the optimization problem is solved considering log-normal distribution, but the channel gains respect a uniform one, then the 90\% of the users' throughput demands are respected for the cases where the channel gains belong to the following intervals: [$\tilde{\mu}-\tilde{\sigma}, \tilde{\mu}+\tilde{\sigma}$], [$\tilde{\mu}-2\tilde{\sigma}, \tilde{\mu}+2\tilde{\sigma}$], and [$\tilde{\mu}-3\tilde{\sigma}, \tilde{\mu}+3\tilde{\sigma}$]. Considering a Student's t distribution with degree of freedom $dof=2$, the 10\% violation goal was also respected. As $Dof \rightarrow \infty$, the Student's t distribution tends to the Gaussian one (log-normal channel gains, which are Gaussian considering the respective values in dB). 

\begin{table}[ht!]
  \begin{center}
    \caption{Percentage of unsatisfied users' throughput requirements.}
    \label{tab:comp_dist}
    \begin{tabular}{c|c|c|c|c} 
      $\tilde{\sigma}$ & Log-Normal & 
      $\begin{array}{c} \text{Uniform} \\ \text{[$\tilde{\mu}-3\tilde{\sigma},$} \\ \text{$\tilde{\mu}+3\tilde{\sigma}$]} \end{array}$ & 
    $\begin{array}{c} \text{Student's t} \\ \text{Distribution} \\ \text{$Dof$ = 2} \end{array}$ & Optimal Value\\
      \hline \hline
      2 & 0.29\% & 3.86 \%  & 6.52 \%  & 0.0046\\
      3 & 0.24\% & 4.20 \%  & 7.82 \%  & 0.0087\\
      4 & 0.12\% & 3.74 \%  & 8.53 \%  & 0.0281\\
      \hline
    \end{tabular}
  \end{center}
\end{table}

To achieve the 90\% goal even for the case of uniform distribution and [$\tilde{\mu}-4\tilde{\sigma}, \tilde{\mu}+4\tilde{\sigma}$], one could increase the uncertainty box, according to
\begin{equation*}
\begin{split}
[\underline{\rho}_{ij},\overline{\rho}_{ij}] = [-2.25, \infty), \quad 
[\underline{\rho}_{ik},\overline{\rho}_{ik}] = (-\infty,2.25],
\end{split}
\end{equation*}
with the cost of increasing the optimal value, as presented in Table \ref{tab:table_comp_2}.
\begin{table}[ht!]
  \begin{center}
    \caption{Percentage of unsatisfied users' throughput requirements under uniform distribution.}
    \label{tab:table_comp_2}
    \begin{tabular}{c|cc|cc} 
       & \multicolumn{2}{c|}{\textbf{Uncertainty box: 90\%}} & \multicolumn{2}{c}{\textbf{Uncertainty box: 94\%}} \\ 
      \hline
      $\tilde{\sigma}$ & $\begin{array}{c} \text{[$\tilde{\mu}-4\tilde{\sigma},$} \\ \text{$\tilde{\mu}+4\tilde{\sigma}$]} \end{array}$ & $\sum_j P_j$ (W) & $\begin{array}{c} \text{[$\tilde{\mu}-4\tilde{\sigma},$} \\ \text{$\tilde{\mu}+4\tilde{\sigma}$]} \end{array}$ & $\sum_j P_j (\tilde{\sigma}=0)$\\ 
      \hline \hline
       2 & 12.91 \% & 0.0046 & 9.88 \% & 0.0052 \\
       3 & 14.34 \% & 0.0087 & 10.79 \% & 0.0113 \\
       4 & 14.21 \% & 0.0281 & 9.92 \% & 0.1403 \\
      \hline
    \end{tabular}
  \end{center}
\end{table}

\subsection{Robustness with respect to users' traffic demands}

To examine the robustness of our approach with respect to the user traffic demands, we solved the problem considering the original throughput levels given by $r$, $\rho_{ij} = -2.04$, $\rho_{ik} = 2.04$. Subsequently we increased the traffic demands and verify the amount of times that those new constraints were not satisfied (with the optimal solution obtained for the original throughput levels). The results are given below. 

\begin{table}[ht!]
  \begin{center}
    \caption{Percentage of unsatisfied users' throughput requirements.}
    \label{tab:table_rob}
    \begin{tabular}{c|c|ccccc} 
      $\tilde{\sigma}$ & $r$ & 1.1 $r$ & 1.2 $r$ & 1.4 $r$ & 1.5 $r$ & 2.3 $r$\\
      \hline \hline
      2 & 0.29\% & 0.70 \% & 1.73 \% & 7.18 \% & 11.33 \% & 40.00 \%\\
      3 & 0.24\% & 0.43 \% & 0.77 \% & 2.44 \% & 3.96 \%  & 23.09 \%\\
      4 & 0.12\% & 0.19 \% & 0.29 \% & 0.72 \% & 1.13 \%  & 10.20 \%\\
      \hline
    \end{tabular}
  \end{center}
\end{table}

It is clear that for larger values of standard deviation $\tilde{\sigma}$, for instance $\tilde{\sigma} = 4$, the uncertainty box increases leading to an optimal solution that is able to respect the probabilistic constraint even when the users throughput are up to about 130\% higher than the ones used in the optimization problem. From one side, this shows that our approach is able to deal with fast and large variations in the throughput demands; however one may notice that the approaches gets more conservative when the channel gains uncertainty increases.  

\section{Conclusions and Future Works}
\label{sec:conclusion}

The minimization of base stations transmission powers in OFDMA networks, subject to individual UEs throughput constraints is studied in this paper. The channel gains between UE and base stations are considered to be random variables with log-normal distribution. By using change of variables with a suitable power function approximation, the highly non-convex chance constrained optimization problem is formulated as a robust mixed-integer Geometric Program. The proposed approach was tested in a realistic scenario: a neighbourhood from a large European city was reconstructed into a ray tracing propagation solver (considering building height, city layout, BSs location, etc), so the expected values for the channel gains could be estimated. The results show that considering the channel gains as deterministic might lead to considerable amount of constraints violation. For future works, less-conservative estimate for the set of uncertainties will be designed.


\small
\bibliographystyle{ieeetr}
\bibliography{ComLet}

\begin{thebibliography}{10}

\bibitem{David2022}
D.~López-Pérez, A.~De~Domenico, N.~Piovesan, G.~Xinli, H.~Bao, S.~Qitao, and M.~Debbah, ``A survey on {5G} radio access network energy efficiency: Massive {MIMO}, lean carrier design, sleep modes, and machine learning,'' {\em {IEEE} Communications Surveys \& Tutorials}, vol.~24, no.~1, pp.~653--697, 2022.

\bibitem{UQ_GlobeCom}
S.~Bakirtzis, I.~Wassell, M.~Fiore, and J.~Zhang, ``Stochastic evaluation of indoor wireless network performance with data-driven propagation models,'' in {\em Proc. IEEE Global Commun. Conf.}, pp.~3587--3592, 12 2022.

\bibitem{Goldsmith_2005}
A.~Goldsmith, {\em Path Loss and Shadowing}, p.~27–63.
\newblock Cambridge University Press, 2005.

\bibitem{Ferreira_2024}
G.~O. Ferreira, A.~F. Zanella, S.~Bakirtzis, C.~Ravazzi, F.~Dabbene, G.~C. Calafiore, I.~Wassell, J.~Zhang, and M.~Fiore, ``A joint optimization approach for power-efficient heterogeneous {OFDMA} radio access networks,'' {\em {IEEE} Journal on Selected Areas in Communications}, pp.~1--14, 2024.

\bibitem{wang2017}
F.~Wang, W.~Chen, H.~Tang, and Q.~Wu, ``Joint optimization of user association, subchannel allocation, and power allocation in multi-cell multi-association {OFDMA} heterogeneous networks,'' {\em IEEE Transactions on Communications}, vol.~65, no.~6, pp.~2672--2684, 2017.

\bibitem{Le2019}
N.-T. Le, L.-N. Tran, Q.-D. Vu, and D.~Jayalath, ``Energy-efficient resource allocation for {OFDMA} heterogeneous networks,'' {\em {IEEE} Transactions on Communications}, vol.~67, no.~10, pp.~7043--7057, 2019.

\bibitem{sun_2}
K.~An, Y.~Sun, Z.~Lin, Y.~Zhu, W.~Ni, N.~Al-Dhahir, K.-K. Wong, and D.~Niyato, ``{Exploiting Multi-Layer Refracting RIS-Assisted Receiver for HAP-SWIPT Networks},'' {\em IEEE Transactions on Wireless Communications}, vol.~23, no.~10, pp.~12638--12657, 2024.

\bibitem{sun_1}
Y.~Sun, Z.~Lin, K.~An, L.~Dong, C.~Li, Y.~Zhu, D.~W.~K. Ng, N.~Al-Dhahir, and J.~Wang, ``{Multi-Functional RIS-Assisted Semantic Anti-Jamming Communication and Computing in Integrated Aerial-Ground Networks},'' {\em IEEE Journal on Selected Areas in Communications}, pp.~1--1, 08 2024.

\bibitem{Boyd:03}
M.~Johansson, L.~Xiao, and S.~Boyd, ``{Optimal Routing and SINR Target Selection for Power-Controlled CDMA Wireless Networks}.'' {WiOpt'03: Modeling and Optimization in Mobile, Ad Hoc and Wireless Networks}, Mar. 2003.
\newblock Poster.

\bibitem{Bao2015}
W.~Bao and B.~Liang, ``Radio resource allocation in heterogeneous wireless networks: A spatial-temporal perspective,'' in {\em {IEEE INFOCOM} 2015 - {IEEE} Conference on Computer Communications}, pp.~334--342, 2015.

\bibitem{xu2021}
Y.~Xu, G.~Gui, H.~Gacanin, and F.~Adachi, ``A survey on resource allocation for {5G} heterogeneous networks: Current research, future trends, and challenges,'' {\em {IEEE} Communications Surveys \& Tutorials}, vol.~23, no.~2, pp.~668--695, 2021.

\bibitem{Boyd2002}
S.~Kandukuri and S.~Boyd, ``Optimal power control in interference-limited fading wireless channels with outage-probability specifications,'' {\em {IEEE} Transactions on Wireless Communications}, vol.~1, no.~1, pp.~46--55, 2002.

\bibitem{Boyd2005}
K.-L. Hsiung, K.~Seung-Jean, and S.~Boyd, ``Power control in lognormal fading wireless channels with uptime probability specifications via robust geometric programming,'' in {\em American Control Conference}, pp.~3955--3959, 2005.

\bibitem{Boyd2007}
S.~Boyd, S.-J. Kim, L.~Vandenberghe, and A.~Hassibi, ``A tutorial on geometric programming,'' {\em Optimization and Engineering}, vol.~8, no.~1, pp.~67--127, 2007.

\bibitem{Andre_2023}
A.~F. Zanella, A.~Bazco-Nogueras, C.~Ziemlicki, and M.~Fiore, ``Characterizing and modeling session-level mobile traffic demands from large-scale measurements,'' in {\em {Proceedings of the 2023 ACM on Internet Measurement Conference}}, IMC '23, (New York, NY, USA), p.~696–709, Association for Computing Machinery, 2023.

\end{thebibliography}
\vfill

\end{document}